\newtheorem{theorem}{Theorem}
\newtheorem{conjecture}{Conjecture}
\newtheorem{corollary}{Corollary}
\newtheorem{proposition}{Proposition}
\newcommand{\Rmnum}[1]{\expandafter\@slowromancap\romannumeral #1@}
 \newcommand{\genus}{\operatorname{genus}}
\newcommand{\Mod}[1]{\ (\textup{mod}\ #1)}
 \newcommand{\Aut}{\operatorname{Aut}}
\newcommand{\Gal}{\operatorname{Gal}}
\newcommand{\mysetminus}{\mathbin{\fgebackslash}}
\newcommand{\Spec}{\operatorname{Spec}}
\titleformat{\section}[block]{\color{black}\large\filcenter}{}{1em}{}
\titleformat{\subsection}[hang]{\bfseries}{}{1em}{}
\theoremstyle{remark}
\newtheorem{definition}{Definition} 
\newtheorem{remark}{Remark}
\begin{document}
\title{Prime divisors in polynomial orbits over function fields}
\author{Wade Hindes}
\date{\today}
\maketitle
\renewcommand{\thefootnote}{}
\footnote{2010 \emph{Mathematics Subject Classification}: Primary: 14G05, 37P55. Secondary: 11R32.}
\footnote{\emph{Key words and phrases}: Arithmetic Dynamics, Rational Points on Curves, Galois Theory.}
\begin{abstract} Given a polynomial $\phi$ over a global function field $K/\mathbb{F}_q(t)$ and a wandering base point $b\in K$, we give a geometric condition on $\phi$ ensuring the existence of primitive prime divisors for almost all points in the orbit $\mathcal{O}_\phi(b):=\{\phi^n(b)\}_{n\geq0}$. As an application, we prove that the Galois groups (over $K$) of the iterates of many quadratic polynomials are large and use this to compute the density of prime divisors of $\mathcal{O}_\phi(b)$.      
\end{abstract}
\begin{section}{Introduction}
From elliptic divisibility sequences to the Fibonacci numbers, it is an important problem in number theory to prove the existence of ``new" prime divisors of an arithmetically defined sequence. For example, such ideas have applications ranging from the undecidability of Hilbert's $10$th problem \cite{Poonen}, to the classification of certain families of subgroups of finite linear groups \cite{Feit,primcycl,Linear,trans,alg}. In this paper, we study the set of prime divisors of polynomial recurrence sequences defined by iteration over global function fields. 

To wit, let $K/\mathbb{F}_q(t)$ be a finite extension, let $V_K$ be a complete set of valuations on $K$, and let $\mathcal{B}=\{b_n\}\subseteq K$ be any sequence. We say that $v\in V_K$ is a \emph{primitive prime divisor} of $b_n$ if 
\[ v(b_n)>0\;\;\;\text{and}\;\;\; v(b_m)=0\;\; \text{for all}\; 1\leq m\leq n-1. \]
Likewise, we define the \emph{Zsigmondy set of} $\mathcal{B}$ to be 
\[\mathcal{Z}(\mathcal{B}):=\big\{n\geq1\;\big\vert\; b_n\;\text{has no primitive prime divisors}\big\}.\]
Over number fields, there are numerous results regarding the finiteness (and size) of $\mathcal{Z}(\mathcal{B})$; for example, see \cite{PrimDiv,Tucker,Silv-Ing, Krieger, Silv-Vojta,SilvPrimDiv}.  

In this paper, we are interested in studying the finiteness of $\mathcal{Z}(\phi,b):=\mathcal{Z}(\mathcal{O}_\phi(b))$, where $\mathcal{O}_\phi(b):=\{\phi^n(b)\}_{n\geq0}$ is the \emph{orbit} of $b\in K$ for $\phi\in K[x]$; here the superscript $n$ denotes iteration (of $\phi$). 

The key geometric notion, allowing us to use techniques in the theory of rational points on curves over $K$ to study $\mathcal{Z}(\phi,b)$, is the following:  
\begin{definition} Let $\phi\in K(x)$ and let $\ell\geq2$ be an integer. Then we say that $\phi$ is \emph{dynamically $\ell$-power non-isotrivial} if there exists an integer $m\geq1$ such that, 
\begin{equation}{\label{Curve}} C_{m,\ell}(\phi):=\big\{(X,Y)\in \mathbb{A}^2(\bar{K})\;\big\vert\;Y^\ell=\phi^m(X)=(\underbracket{\phi\circ\phi\dots \circ\phi}_m)(X)\big\} 
\end{equation} 
is a non-isotrivial curve \cite{isotrivial} of genus at least $2$.         
\end{definition}
Similarly, we have the following refined notions of primitive prime divisors and Zsigmondy sets:  
\begin{definition} Let $\phi\in K(x)$, let $b\in K$, and let $\ell$ be an integer. We say that a place $v\in V_K$ is an \emph{$\ell$-primitive prime divisor} for $\phi^n(b)$ if all of the following conditions are satisfied: 
\begin{enumerate}[itemsep=1.1mm]
\item[\textup{(1)}] $v(\phi^n(b))>0$,
\item[\textup{(2)}] $v(\phi^m(b))=0$ for all $1\leq m\leq n-1$ such that $\phi^m(b)\neq0$,
\item[\textup{(3)}] $v(\phi^n(b))\not\equiv0\Mod{\ell}$.    
\end{enumerate} 
Moreover, we call  
\begin{equation}{\label{Zig}} \mathcal{Z}(\phi,b,\ell):=\big\{n\;\big\vert\; \phi^n(b)\; \text{has no $\ell$-primitive prime divisors}\big\}
\end{equation} the \emph{$\ell$-th Zsigmondy set} for $\phi$ and $b$.  
\end{definition} 
Note that $\mathcal{Z}(\phi,b)\subseteq\mathcal{Z}(\phi,b,\ell)$ for all $\ell$. Hence, it suffices to show that $\mathcal{Z}(\phi,b,\ell)$ is finite for a single $\ell$ to ensure that all but finitely many elements of $\mathcal{O}_\phi(b)$ have primitive prime divisors. Moreover, we use the notions of height $h_K$ and canonical height $\hat{h}_\phi$ found in \cite{Baker}.   
\begin{theorem}{\label{PrimDivThm}} Suppose that $\phi\in K[x]$, $b\in K$, and $\ell\geq2$ satisfy the following conditions: 
\begin{enumerate}[itemsep=1.1mm]
\item[\textup{(1)}] $\phi$ is dynamically $\ell$-power non-isotrivial,
\item[\textup{(2)}] $b$ is wandering (i.e. $\hat{h}_\phi(b)>0$). 
\end{enumerate}
Then $\mathcal{Z}(\phi,b,\ell)$ and $\mathcal{Z}(\phi,b)$ are finite. In particular, all but finitely many elements of $\mathcal{O}_\phi(b)$ have primitive prime divisors.     
\end{theorem}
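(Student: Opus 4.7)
The plan is to extract, from a failure of $\ell$-primitivity at $\phi^n(b)$, a $K$-rational point on a twist of the curve $C_{m,\ell}(\phi)$, and then invoke a Mordell-type finiteness theorem for non-isotrivial curves of genus $\geq 2$ over function fields (Grauert--Samuel, refined in positive characteristic by Voloch). Fix $m \geq 1$ with $C_{m,\ell}(\phi)$ non-isotrivial of genus $\geq 2$, granted by hypothesis (1); this $m$ is held fixed in what follows.

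Given $n > m$ with $n \in \mathcal{Z}(\phi,b,\ell)$, write $\phi^n(b) = d_n \cdot y_n^\ell$ for $y_n \in K$ and $d_n \in K^*$ chosen $\ell$-th power free. Setting $X_n := \phi^{n-m}(b)$ and using the identity $\phi^n(b) = \phi^m(X_n)$, the pair $(X_n, y_n)$ is a $K$-rational point on the twist $C_{m,\ell}^{(d_n)}(\phi)\colon d_n Y^\ell = \phi^m(X)$. Each such twist is again non-isotrivial of genus $\geq 2$, since both properties are geometric invariants. If we can show that $[d_n] \in K^*/(K^*)^\ell$ ranges over a finite subset $\mathcal{D}$ as $n$ varies in $\mathcal{Z}(\phi,b,\ell)$, then over the finite extension $L = K\bigl(\{\sqrt[\ell]{d} : d \in \mathcal{D}\}\bigr)$ the points $(X_n, y_n)$ lift to $K$-rational points on the single curve $C_{m,\ell}(\phi)$ base-changed to $L$. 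Function-field Mordell then forces $\{X_n\}$ to be finite, and since $b$ is wandering, $\hat h_\phi(X_n) = (\deg \phi)^{n-m}\,\hat h_\phi(b) \to \infty$, so each value of $X_n$ is attained for at most finitely many $n$. The finiteness of $\mathcal{Z}(\phi,b,\ell)$, and hence of $\mathcal{Z}(\phi,b) \subseteq \mathcal{Z}(\phi,b,\ell)$, follows.

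The main obstacle is confining $[d_n]$ to a finite subset of $K^*/(K^*)^\ell$. Choose a finite set $S$ of places of $K$ containing the bad-reduction primes of $\phi$ and all places at which some $\phi^i(b)$, $0 \leq i < m$, has nonzero valuation. For $v \notin S$ with $v(d_n) > 0$, the failure of $\ell$-primitivity forces $v(\phi^j(b)) > 0$ for some minimal $1 \leq j < n$; reducing modulo $v$, the orbit of $b$ first meets $0$ at step $j$ and then evolves along the reduction of the orbit of $0$, so $v \mid \phi^n(b)$ forces $v \mid \phi^{n-j}(0)$. Hence $\Supp(d_n) \subseteq S \cup \bigcup_{k \geq 1}\Supp(\phi^k(0))$. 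When $0$ is preperiodic for $\phi$, the right-hand side is a single finite set of places and $[d_n]$ lies in the finite subgroup of $K^*/(K^*)^\ell$ generated by their uniformizers, so we are done.

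In the contrasting case where $0$ is wandering for $\phi$, the right-hand side grows with $n$ and an additional input is needed. Here I would combine the height identity $h_K(d_n) + \ell\, h_K(y_n) = h_K(\phi^n(b)) + O(1)$ with a refined orbit-level accounting of the places contributing to $d_n$ (e.g.\ via a function-field Mason--Stothers or ABC-style estimate applied to $\phi^m(X_n) - d_n y_n^\ell = 0$) to obtain $h_K(d_n) = o(h_K(\phi^n(b)))$, after which Northcott-type finiteness for $\ell$-free classes of bounded height in $K$ still pins $[d_n]$ to a finite set. This wandering-zero branch is where I expect the main technical work to lie; the preperiodic branch and the passage from a finite set of twists back to $\mathcal{Z}(\phi,b)$ are then formal.
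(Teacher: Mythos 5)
Your setup is on the right track — writing $\phi^n(b)=d_n y_n^\ell$ with $d_n$ $\ell$-th-power-free, viewing $(X_n,y_n)$ as a $K$-rational point on a twist of $C_{m,\ell}(\phi)$, and noting (using good reduction outside $S$) that any $v\notin S$ dividing $d_n$ must divide some $\phi^k(0)$ — and the preperiodic-zero branch of your argument is sound. But the wandering-zero branch has a genuine gap: your plan to force $[d_n]$ into a \emph{finite} subset of $K^*/(K^*)^\ell$ cannot work. An estimate of the shape $h_K(d_n)=o\bigl(h_K(\phi^n(b))\bigr)$ does not make $h_K(d_n)$ \emph{bounded}; in fact one should expect $h_K(d_n)$ to grow roughly like $d^{\,n/2}$ (primes dividing $d_n$ can be spread between those dividing $\phi^i(b)$ and those dividing $\phi^j(0)$ with $i,j\le n/2$). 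Northcott-type finiteness applies only to bounded height, so $[d_n]$ will in general range over an infinite set of classes and there is no single finite extension $L$ over which all $(X_n,y_n)$ become rational. Grauert--Samuel, being a pure finiteness statement on a fixed curve over a fixed field, is then not strong enough.

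What the paper uses instead, and what your argument is missing, is a \emph{quantitative} Mordell for non-isotrivial curves: a height inequality (Moriwaki \cite{htineq}, Kim \cite{Kim}) of the form $h_{\kappa}(P)\le A_1\, d(P)+A_2$, where $d(P)$ measures the ramification/genus of the field of definition of $P$, with constants depending only on the fixed curve $C_{m,\ell}(\phi)$ and not on the extension. Applied to $P_n=\bigl(\phi^{n-m}(b),\,y_n\sqrt[\ell]{u_n d_n}\bigr)\in C_{m,\ell}(\phi)\bigl(K(\sqrt[\ell]{u_n d_n})\bigr)$, this yields $h_K(\phi^{n-m}(b))\le D_1\, h_K(d_n)+D_2$. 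Crucially, the bound is \emph{linear} in the twist complexity $h_K(d_n)$, which (by the split refinement above) is $O\bigl((\ell-1)\sum_{i\le n/2} h_K(\phi^i(b))+\sum_{j\le n/2}h_K(\phi^j(0))\bigr)=O(d^{\,n/2})$. Comparing with $h_K(\phi^{n-m}(b))\asymp d^{\,n-m}\hat h_\phi(b)$, which dominates $d^{\,n/2}$ since $m$ is fixed and $\hat h_\phi(b)>0$, bounds $n$. So the essential missing ingredient is the Szpiro/Vojta-type height inequality with explicit linear dependence on the discriminant; without it, the passage from ``the twists have sub-maximal height'' to ``finitely many twists'' does not go through.
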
 
In addition to determining whether or not a sequence has primitive prime divisors, it is interesting to compute the ``size" of its set of prime divisors (in terms of density) \cite{Hasse, Lagarias}, a problem which has applications to the dynamical Mordell-Lang conjecture \cite{Mordell-Lang} and to questions regarding the size of hyperbolic Mandelbrot sets \cite{RafeThesis}.   

To do this, let $\mathcal{O}_K$ be the integral closure of $\mathbb{F}_q[t]$ in $K$ and let $\mathfrak{q}\subseteq \mathcal{O}_K$ be a prime ideal, determining a valuation $v_\mathfrak{q}$ on $K$. For such $\mathfrak{q}$, define the \emph{norm} of $\mathfrak{q}$ to be $N(\mathfrak{q}):=\#(\mathcal{O}_K/\mathfrak{q}\mathcal{O}_K)$, and let $\delta(\mathcal{P})$ be the \emph{Dirchlet density} of a set of primes $\mathcal{P}$ of $K$:
\begin{equation} \delta(\mathcal{P}):=\lim_{s\rightarrow 1^+}\frac{\sum_{\mathfrak{q}\in\mathcal{P}}N(\mathfrak{q})^{-s}}{\sum_{\mathfrak{q}}N(\mathfrak{q})^{-s}}
\end{equation}
We use Theorem \ref{PrimDivThm} and ideas from the Galois theory of iterates to compute the density of 
\[\mathcal{P}_\phi(b):=\big\{\mathfrak{q}\in\Spec(\mathcal{O}_K)\;\big\vert\; v_\mathfrak{q}(\phi^n(b))>0\;\text{for some $n\geq0$}\big\},\]
the set of prime divisors of the orbit $\mathcal{O}_\phi(b)$. In particular, we establish a version of \cite[Conj. 3.11]{R.Jones}; see \cite[Theorem 1]{uniformity} for the corresponding statement in characteristic zero (with uniform bounds) and \cite{B-J,R.Jones} for introductions to dynamical Galois theory.   
\begin{corollary}{\label{Galois}} Let $K/\mathbb{F}_q(t)$ for some odd $q$ and let $\phi\in K[x]$ be a quadratic polynomial.\\ Write $\phi(x)=(x-\gamma)^2+c$ and suppose that $\phi$ satisfies the following conditions: 
\begin{enumerate}[itemsep=1.5mm]
\item[\textup{(a)}] $\phi$ is not post-critically finite (i.e. $\gamma$ is wandering), 
\item[\textup{(b)}] the adjusted critical orbit $\widebar{\mathcal{O}}_\phi(\gamma)=\{-\phi(\gamma),\phi^n(\gamma)\}_{n\geq2}$ contains no squares in K,
\item[\textup{(c)}] the $j$-invariant of the elliptic cure $E_\phi: Y^2=(X-c)\cdot\phi(X)$ is non-constant. 
\end{enumerate}
Then all of the following statements hold:   
\begin{enumerate}[itemsep=1.5mm]
\item[\textup{(1)}] $\mathcal{Z}(\phi,b,2)$ is finite for all wandering points $b\in K$,  
\item[\textup{(2)}] $G_\infty(\phi)\leq\Aut(T(\phi))$ is a finite index subgroup, 
\item[\textup{(3)}] $\delta(\mathcal{P}_\phi(b))=0$\; for all $b\in K$.  
\end{enumerate} 
\end{corollary}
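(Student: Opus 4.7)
The three statements are proved sequentially: (1) reduces to Theorem~\ref{PrimDivThm} once we verify that $\phi$ is dynamically $2$-power non-isotrivial; (2) follows by applying (1) with $b=\gamma$ together with the standard primitive--prime--divisor criterion for level-wise maximality of the arboreal tower; and (3) is a Chebotarev computation against the Galois image controlled by (2).

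\textbf{Part (1).} We seek some $m$ for which $C_{m,2}(\phi)\colon Y^2=\phi^m(X)$ is non-isotrivial of genus at least $2$. Since $\gamma$ is wandering by (a), a chain-rule argument shows that no iterate $\phi^j(\gamma)$ vanishes, so $\phi^m$ is separable; this makes $C_{m,2}$ smooth of genus $2^{m-1}-1\geq 2$ once $m\geq 3$. For non-isotriviality we work at $m=2$: after translating so that $\gamma=0$, the factorization $(Y-\phi(X))(Y+\phi(X))=c$ of $Y^2=\phi^2(X)=(X^2+c)^2+c$ exhibits $C_{2,2}$ explicitly as an elliptic curve whose $j$-invariant is a non-constant rational function of $c$; the same is true for $j(E_\phi)$. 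Both invariants land in the constant field $k\subseteq K$ if and only if $c\in k$, so hypothesis (c), which says $j(E_\phi)\notin k$, forces $c\notin k$ and makes $C_{2,2}$ non-isotrivial. The covers $C_{m+1,2}\to C_{m,2}$ given by $(X,Y)\mapsto(\phi(X),Y)$ are degree-$2$ Galois over $K$ with involution $X\mapsto 2\gamma-X$, so by a standard descent argument isotriviality would propagate down; non-isotriviality therefore propagates upward, and Theorem~\ref{PrimDivThm} applied at (say) $m=3$ yields (1).

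\textbf{Part (2).} Apply (1) with $b=\gamma$, permissible by (a): for all $n\geq N_0$, the iterate $\phi^n(\gamma)$ carries a $2$-primitive prime divisor $v$, so $v(\phi^n(\gamma))$ is positive and odd while $v(\phi^j(\gamma))=0$ for $0<j<n$. The splitting field $K_{n-1}$ of the level-$(n-1)$ preimage tree of $\phi$ is ramified over $K$ only at places dividing $\prod_{j<n}\phi^j(\gamma)$, so $v$ is unramified in $K_{n-1}/K$; a lift $w\mid v$ therefore still sees $\phi^n(\gamma)$ with odd valuation, and $\phi^n(\gamma)$ is a non-square in $K_{n-1}$. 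The standard arboreal recursion for quadratic polynomials then yields $[G_n:G_{n-1}]=2^{2^{n-1}}$ for all $n\geq N_0$, with condition (b) discharging the finitely many small-$n$ non-squareness conditions; hence $G_\infty(\phi)$ has finite index in $\Aut(T(\phi))$.

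\textbf{Part (3).} If $b$ is preperiodic then $\mathcal{O}_\phi(b)$ is finite and the claim is trivial; assume $b$ wandering. A place $\mathfrak{q}$ lies in $\mathcal{P}_\phi(b)$ precisely when $b\bmod\mathfrak{q}$ is a root of some $\phi^n\bmod\mathfrak{q}$, equivalently when the Frobenius at $\mathfrak{q}$ in $G_n$ fixes some level-$n$ preimage specializing to $b$. Chebotarev density over function fields then bounds $\delta(\mathcal{P}_\phi(b))$ by $\lim_{n\to\infty}|H_n|/|G_n|$, where $H_n\subseteq G_n$ is the set of elements with a fixed point at level $n$. By (2), $G_\infty$ has finite index in $\Aut(T(\phi))$, and for the iterated $C_2$-wreath product a standard random-word calculation (cf.\ \cite{R.Jones}, \cite{B-J}) shows $|H_n|/|G_n|\to 0$, whence $\delta(\mathcal{P}_\phi(b))=0$. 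The principal obstacle is the non-isotriviality input in part (1): although $E_\phi$ and $C_{2,2}$ are not the same curve, showing that their $j$-invariants share the same constancy behaviour in the parameter $c$ is what makes (c) exactly the right hypothesis.
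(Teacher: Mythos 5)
Your Part (1) is where the paper's proof does its real work, and it is precisely there that your argument has a gap. The key issue: the translation to $\gamma=0$ does \emph{not} preserve the superelliptic curves $C_{m,2}(\phi)$. Writing $\psi(x)=x^2+(c-\gamma)$ for the conjugate, the relation between iterates is $\phi^m(x+\gamma)=\psi^m(x)+\gamma$, so substituting $X=X'+\gamma$ into $Y^2=\phi^m(X)$ gives $Y^2=\psi^m(X')+\gamma$, which is \emph{not} the curve $C_{m,2}(\psi)\colon Y^2=\psi^m(X')$. Thus the reduction to the one-parameter family $Y^2=(X^2+c)^2+c$ is invalid when $\gamma\neq 0$, and the subsequent claim that ``both invariants land in the constant field if and only if $c\in k$'' is both unjustified and suspect (the $j$-invariant of $E_\phi$ depends jointly on $\gamma$ and $c$, not on $c$ alone). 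What actually makes hypothesis (c) the correct input is the existence of the explicit non-constant morphism
\[
\Phi_m\colon C_{2,m}(\phi)\longrightarrow E_\phi,\qquad \Phi_m(x,y)=\bigl(\phi^{m-1}(x),\;y\cdot(\phi^{m-2}(x)-\gamma)\bigr),
\]
combined with the paper's Proposition \ref{prop}: a non-constant morphism with isotrivial source has isotrivial target, so non-isotriviality of $E_\phi$ pulls back to $C_{2,m}(\phi)$. Your ``descent'' remark about the tower $C_{m+1,2}\to C_{m,2}$ is essentially this same proposition, but you use it only to propagate from $C_{2,2}$ upward without having correctly established the base case. Replacing your $j$-invariant computation with the map $\Phi_m$ and Proposition \ref{prop} closes the gap and recovers the paper's argument.

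A secondary slip: you invoke condition (a) (the critical point is wandering) to conclude that no $\phi^j(\gamma)$ vanishes, hence that $\phi^m$ is separable and $C_{m,2}$ is smooth. But wandering does not preclude $\phi^j(\gamma)=0$ — that would only follow if $0$ were preperiodic. The correct source is condition (b): if $\phi^j(\gamma)=0$ for some $j\geq 1$, then $0$ lies in the adjusted critical orbit and $0$ is a square, violating (b). The paper handles this by citing \cite[Proposition 4.2]{Jones} for irreducibility and \cite[Lemma 2.6]{Jones} for smoothness, both hinging on (b). Parts (2) and (3) of your sketch are essentially unpackings of \cite[Theorem 3.3]{RafeThesis} and \cite[Theorem 4.2]{R.Jones}, \cite[Theorem 1.3]{RafeThesis}, which the paper cites directly; they are consistent with the paper's route once (1) is repaired.
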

\begin{remark} One expects similar statements to hold for $\phi(x)=x^\ell+c$ and $\ell$ a prime. Namely,  if one can show that $\phi$ is dynamically $\ell$-power non-isotrivial, then $\mathcal{Z}(\phi,0,\ell)$ is finite by Theorem \ref{PrimDivThm} and $G_\infty(\phi)$ is a finite index subgroup of $\Aut(T(\phi))$ by \cite[Theorem 25]{Eventually}.    
\end{remark}
In particular, we apply Theorem \ref{PrimDivThm} and Corollary \ref{Galois} to the explicit family 
\[\phi_{(f,g)}(X)=\big(X-f(t)\cdot g(t)^d\big)^2+g(t)\;\;\;\;\text{for}\;\;\; f\in\mathbb{F}_q(t),-g\notin(\mathbb{F}_q(t))^2,\;\text{and}\;d\geq1.\] Note that by letting $f=0$ and $g=t$, we recover the main result of \cite{RafeThesis}. 
\begin{corollary}{\label{eg}} Let $K=\mathbb{F}_q(t)$ and let $\phi:=\phi_{(f,g)}$ be such that the $j$-invariant of the elliptic curve \[E_{\phi_{(f,g)}}: Y^2=(X-g(t))\cdot \phi_{(f,g)}(X)\] is non-constant. Then $\mathcal{Z}(\phi,b,2)$ is finite for all wandering base points, $G_\infty(\phi)\leq\Aut(T(\phi))$ is a finite index subgroup, and $\delta(\mathcal{P}_\phi(b))=0$ for all $b\in K$.    
\end{corollary}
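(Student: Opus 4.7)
This corollary is a direct application of Corollary~\ref{Galois}, and the task reduces to verifying the three hypotheses (a), (b), (c) for $\phi:=\phi_{(f,g)}$, with $\gamma=fg^d$ and $c=g$; hypothesis (c) is assumed directly.

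For (b), since $\phi(\gamma)=g$ we have $\widebar{\mathcal{O}}_\phi(\gamma)=\{-g\}\cup\{\phi^n(\gamma):n\geq 2\}$, and $-g$ is a non-square in $K$ by hypothesis. For $n\geq 2$, I plan to prove by induction on $n$ the local congruence
\[
\phi^n(\gamma)\equiv g\pmod{g^2}
\]
at every place where $f$ and $g$ are integral; the inductive step follows from
\[
\phi^{n-1}(\gamma)-fg^d\equiv g-fg^d=g(1-fg^{d-1})\pmod{g^2},
\]
whose square lies in $g^2\mathcal{O}_{K,P}$, so adding $g$ restores the residue. Consequently, at any place $P$ with $v_P(g)$ positive and odd we obtain $v_P(\phi^n(\gamma))=v_P(g)$ odd, forcing $\phi^n(\gamma)\notin K^2$. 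Such a place $P$ exists whenever the divisor of $g$ in $\Spec(\mathbb{F}_q[t])$ has an irreducible component of odd multiplicity. In the residual subcase $g=ah^2$ with $a\in\mathbb{F}_q^\times$ and $-a\notin(\mathbb{F}_q^\times)^2$, hypothesis (c) rules out $\phi$ being $K$-conjugate to a polynomial over $\overline{\mathbb{F}}_q$, and I would factor each $\phi^n(\gamma)$ over the quadratic constant extension $\mathbb{F}_q(\sqrt{-a})(t)$: a Galois argument shows that primes of $\mathbb{F}_q[t]$ dividing $\phi^n(\gamma)$ must be of even degree, and then exhibiting any such prime of odd multiplicity (which the non-isotriviality of $E_\phi$ guarantees) suffices to rule out $\phi^n(\gamma)\in K^2$.

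For (a), the non-constancy of $j(E_\phi)$ precludes $\phi$ being $K$-conjugate to a polynomial defined over $\overline{\mathbb{F}}_q$; a direct calculation of $\deg_t\phi^n(\gamma)$ (using $\deg_t\phi^{n+1}(\gamma)=2\deg_t(\phi^n(\gamma)-fg^d)$) shows it grows without bound, so the canonical height $\hat h_\phi(\gamma)>0$ by the theory developed in \cite{Baker}, and $\gamma$ is wandering.

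The principal obstacle lies in the ``unit-twist'' subcase of (b) where $g$ differs from a square only by a constant in $\mathbb{F}_q^\times$: the direct mod-$g^2$ argument is vacuous, and one must instead work with the factorization of $\phi^n(\gamma)$ in $\mathbb{F}_q(\sqrt{-g},t)$, using hypothesis (c) in an essential way to preclude the degeneracies that would make $\phi^n(\gamma)$ a square.
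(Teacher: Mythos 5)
Your overall strategy matches the paper's exactly: reduce to verifying hypotheses (a), (b), (c) of Corollary~\ref{Galois}, with (c) given. For (b), your mod-$g^2$ congruence $\phi^n(\gamma)\equiv g\pmod{g^2}$ (valid for $n\geq 2$, using $g\mid h:=g-fg^d$) is correct and gives a clean valuation-theoretic argument whenever $g$ has a prime of odd multiplicity. But you leave the ``unit-twist'' case ($g=ah_0^2$ with $a\in\mathbb{F}_q^\times$, $-a$ a non-square) genuinely unresolved; the sketch over $\mathbb{F}_q(\sqrt{-a})(t)$ is vague (the claim that primes dividing $\phi^n(\gamma)$ must have even degree, and that ``non-isotriviality of $E_\phi$ guarantees'' a prime of odd multiplicity, are not substantiated) and I do not see how to make it work. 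This is a real gap, not a deferral: the valuation argument is structurally blind precisely in this residual case.

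The paper avoids the dichotomy entirely. Starting from the same divisibility $g\mid h$, it writes $\phi^n(\gamma)=g^2k^2+g=g(gk^2+1)$ with $g$ and $gk^2+1$ coprime in the UFD $\mathbb{F}_q[t]$. If $\phi^n(\gamma)=j^2$, both coprime factors are squares up to units, and the Pell-type identity $m^2-gk^2=1$ factors as $(m-lk)(m+lk)=\text{unit}$, forcing $m$ and $lk$ to be constants; unwinding, $g$, $h$, and $f$ are all constant, so $j(E_\phi)$ is constant, contradicting (c). This handles the unit-twist case with no extra work (the units one picks up in the UFD decomposition wash out in the same Pell factorization). You should replace the case split with this uniform factorization argument. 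For (a), your degree recursion is also incomplete: if $\deg(h)=0$ the degrees need not grow, and this case must be analyzed separately. The paper shows $\deg(h)=0$ forces either $g$ constant (hence $f$ constant and $j(E_\phi)$ constant) or $1-fg^{d-1}=0$, i.e.\ $\phi(x)=(x-g)^2+g$ and $j(E_\phi)=1728$; either way hypothesis (c) is violated. So in both (a) and (b), the non-constancy of $j(E_\phi)$ does the essential work of killing the degenerate cases, and your proposal does not yet invoke it where it is actually needed.
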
      
\end{section} 
\begin{section}{Primitive Prime Divisors and Superelliptic Curves}
To prove Theorem \ref{PrimDivThm}, we build on our techniques from the characteristic zero setting \cite{uniformity}. There, among other things, we prove the uniform bound 
\begin{equation} \#\mathcal{Z}(\phi,\gamma,2)\leq17
\end{equation} 
for all $\phi(x)=(x-\gamma(t))^2+c(t)$ satisfying $\deg(\gamma)\neq \deg(c)$; here $\gamma$ and $c$ are polynomials with coefficients in any field of characteristic zero. Additionally, we adapt ideas from \cite{Tucker} and use linear height bounds for rational points on non-isotrivial curves \cite{Kim} to prove our results.     
\begin{proof}[Proof (Theorem)] Throughout the proof, it will be useful to consider only $\ell$-primitive prime divisors avoiding some finite subset $S\subseteq V_K$. Therefore, we make the following convention:
\begin{equation} 
\mathcal{Z}(\phi,b,S,\ell):=\big\{n\;\big\vert\; \phi^n(b)\; \text{has no $\ell$-primitive prime divisors}\; v\in V_K\mysetminus S\big\}.
\end{equation}   
Clearly $\mathcal{Z}(\phi,b,\ell)\subseteq\mathcal{Z}(\phi,b,\ell,S)$ for all $S$, and so it suffices to show that $\mathcal{Z}(\phi,b,\ell,S)$ is finite for some $S\subseteq V_K$. Note that there is no harm in enlarging $S$. In particular, we may assume that 
\[\text{(a)}.\;\;\;b\in\mathcal{O}_{K,S}\;\;\;\;\;\; \text{(b)}.\;\;\;\phi\in\mathcal{O}_{K,S}[x]\;\;\;\;\;\;\text{(c)}.\;\;\; v(a_d)=0\;\;\text{for all}\; v\notin S\;\;\;\;\;\; \text{(d)}.\;\;\; \mathcal{O}_{K,S}\;\text{is a UFD,}\] where $a_d$ is the leading term of $\phi$. Note that condition (d) is made possible by \cite[Prop. 14.2 ]{Rosen}. Similarly, we see that 
\[\mathcal{Z}(\phi,b,S,\ell)\subseteq\mathcal{Z}(\phi,\phi^n(b),S,\ell)\cup\{t\in\mathbb{Z}\;\vert\; 1\leq t\leq n\}\;\;\,\text{for all}\;n\geq0.\]
Therefore, after replacing $b$ with $\phi^n(b)$ for some $n$, we may assume that $0\notin\mathcal{O}_\phi(b)$. By the assumptions on $S$ above, we see that $\phi^n(b)\in\mathcal{O}_{K,S}$ for all $n$, permitting us to write 
\begin{equation}{\label{decomp}} \phi^n(b)=u_n\cdot d_n\cdot y_n^\ell,\;\;\text{for some}\;\;\; d_n,y_n\in\mathcal{O}_{K,S}\;,\;u_n\in\mathcal{O}_{K,S}^*. 
\end{equation} 
However, since $\mathcal{O}_{K,S}^*$ is a finitely generated group \cite[Prop. 14.2 ]{Rosen}, we may write $u_n=\textbf{u}_1^{r_1}\cdot \textbf{u}_2^{r_2}\dots \textbf{u}_t^{r_t}$ for some basis $\{\textbf{u}_i\}$ of $\mathcal{O}_{K,S}^*$ and some integers $0\leq r_i\leq\ell-1$. In particular, the height $h_K(u_n)$ is bounded independently of $n\geq0$. Similarly, we may assume that $0\leq v(d_n)\leq\ell-1$ for all $v\notin S$. To see this, we use the correspondence $V_K\mysetminus S\longleftrightarrow\Spec(\mathcal{O}_{K,S})$ discussed in \cite[Ch. 14]{Rosen} and the fact that $\mathcal{O}_{K,S}$ is a UFD to write 
\[d_n=p_1^{e_1}\cdot p_2^{e_2}\cdots p_s^{e_s}\big(p_1^{q_1}\cdot p_2^{q_2}\cdots p_s^{q_s}\big)^\ell,\;\;\;\;\; p_i\in\Spec(\mathcal{O}_{K,S})\] 
for some integers $e_i, q_i$ satisfying $v_{p_i}(d_n)=q_i\cdot\ell+e_i$ and $0\leq e_i<\ell$. In particular, by replacing $d_n$ with $\big(p_1^{e_1}\cdot p_2^{e_2}\cdots p_s^{e_s}\big)$ and $y_n$ with $\big(y_n\cdot p_1^{q_1}\cdot p_2^{q_2}\cdots p_s^{q_s}\big)$, we may assume that $0\leq v(d_n)\leq\ell-1$ for all $v\in V_K\mysetminus S$ as claimed.

Now suppose that $n\in Z(\phi,b,S,\ell)$. It is our goal to show that $n$ is bounded. To do this, first note that conditions (b) and (c) imply that $\phi$ has good reduction (see \cite[Thm. 2.15]{Silv-Dyn}) modulo the primes in $\Spec(\mathcal{O}_{K,S})$. In particular, if $p$ is such that $v_{p}(d_n)>0$ and $n\in Z(\phi,b,S,\ell)$, then $v_p(\phi^m(b))>0$ for some $1\leq m\leq n-1$. Moreover, 
\[\phi^{n-m}(0)\equiv\phi^{n-m}(\phi^{m}(b))\equiv\phi^n(b)\equiv0\Mod{p};\] see \cite[Thm. 2.18]{Silv-Dyn}. 
Therefore, we have the refinement  
\begin{equation}{\label{refinement}}  d_n=\prod p_i^{e_i},\;\;\text{where}\;\; p_i\big\vert\phi^{t_i}(b)\;\text{or}\; p_i\big\vert\phi^{t_i}(0)\;\; \text{for some}\; 1\leq t_i\leq\Big\lfloor \frac{n}{2}\Big\rfloor.  
\end{equation}
Moreover, as noted above, we may assume that $0\leq e_i\leq\ell-1$. Hence 
\begin{equation}{\label{htestimate}} \boxed{h_K(d_n)\leq (\ell-1)\cdot\bigg(\sum_{i=1}^{\lfloor\frac{n}{2}\rfloor}h_K(\phi^i(b))+ \sum_{j=1}^{\lfloor\frac{n}{2}\rfloor}h_K(\phi^j(0))\bigg)} 
\end{equation} 
Now, choose (and fix) an integer $m\geq1$ such that the curve 
\[C_{m,\ell}(\phi): Y^\ell=\phi^m(X)\] is nonsingular and of genus at least two - possible, since $\phi$ is dynamically $\ell$-power non-isotrivial. If $n\leq m$ for all $n\in\mathcal{Z}(\phi,b,\ell,S)$, then we are done. Otherwise, we may assume that $n>m$ so that (\ref{decomp}) implies that 
\[P_n:=\big(\phi^{n-m}(b)\;,\;y_n\cdot\sqrt[\ell]{u_n\cdot d_n}\, \big)\in C_m(\phi)\big(K\big(\sqrt[\ell]{u_n\cdot d_n}\big)\big)\;\;. \]
It follows from any of the bounds (suitable to positive characteristic) discussed in the introductions of \cite{Kim} or \cite{htineq} that there exist constants $A_1, A_2>0$ such that 
\begin{equation}{\label{Szpiro}}
h_{\kappa(\phi,m)}(P_n)\leq A_1\cdot d(P_n)+A_2,
\end{equation}  
where $\kappa(\phi,m)$ is the canonical divisor of $C_{m,\ell}(\phi)$, \[d(P_n):=\frac{2\cdot \genus\big(K_n\big)-2}{\big[K_n:K\big]},\;\;\;\;\text{and}\;\;\;K_n:=K\big(\sqrt[\ell]{u_n\cdot d_n}\,\big).\]
Crucially, the bounds $A_i=A_i(\phi,\ell,m)$ are independent of both $b$ and $n$. We note that the bounds on (\ref{Szpiro}) have been improved by Kim \cite[Theorem 1]{Kim}, although we do not need them here. 

On the other hand, it follows from \cite[Prop. III.7.3 and Remark III.7.5]{ffields} that $d(P_n)\leq B_1\cdot h_K(u_n\cdot d_n)+B_2$ for some positive constants $B_i=B_i(K)$. Likewise, $h_K(u_n\cdot d_n)\leq h_K(d_n)+B_3(K,S,\ell)$, since the height of $u_n$ is absolutely bounded. In particular, after combining these bounds with those on (\ref{Szpiro}), we see that  
\begin{equation}{\label{combin1}}
h_{\kappa(\phi,m)}(P_n)\leq D_1\cdot h_K(d_n)+D_2, \;\;\;\;\;\;\; D_i=D_i(K,\phi,S,\ell,m)>0.
\end{equation} 
However, if $\mathcal{D}_1$ is an ample divisor on $C_{m,\ell}(\phi)$ and $\mathcal{D}_2$ is an arbitrary divisor, then 
\begin{equation}{\label{Divisor}} \lim_{h_{\mathcal{D}_1}(P)\rightarrow\infty}\frac{h_{\mathcal{D}_2}(P)}{h_{\mathcal{D}_1}(P)}=\frac{\deg\mathcal{D}_2}{\deg{\mathcal{D}_1}},\;\;\;\;\;\;P\in C_{m}(\phi);
\end{equation}  
see \cite[Thm III.10.2]{SilvA}. In particular, if $\pi:C_{m,\ell}(\phi)\rightarrow\mathbb{P}^1$ is the covering $\pi(X,Y)=X$, then a degree one divisor on $\mathbb{P}^1$ (giving the usual height $h_K$ on projective space) pulls back to a $\deg(\pi)$ divisor $\mathcal{D}_2$ on $C$ satisfying $h_{\mathcal{D}_2}(P)=h_K(\pi(P))$. 

We deduce from (\ref{Divisor}) that there exist constants $\beta$ and $E=E(\phi,m,\ell)$ such that 
\begin{equation}{\label{limit}} h_{\kappa(\phi,m)}(P)>\beta\;\;\;\;\;\text{implies}\;\;\;\;\; h_K(\pi(P))\leq E\cdot h_{\kappa(\phi,m)}(P)+1 
\end{equation} 
for all $P\in C_m(\phi)(\bar{K})$. However, note that 
\begin{equation}{\label{finiteness}} 
T:=\big\{P_n\;\big\vert\; h_{\kappa(\phi,m)}(P_n)\leq\beta\}\subseteq\big\{P\in C_m(\phi)(\bar{K})\;\big\vert\;\; h_{\kappa(\phi,m)}(P)\leq\beta\;\;\,\text{and}\,\;\; [K(P):K]\leq\ell\big\}, 
\end{equation}
and the latter set is finite, since $\kappa(\phi,m)$ is ample; see \cite[Thm. 10.3]{SilvA}. Hence, (\ref{htestimate}),(\ref{combin1}), (\ref{limit}), and (\ref{finiteness}) together imply that 
\begin{equation}{\label{combin2}} 
h_K(\phi^{n-m}(b))\leq F_1\cdot \bigg(\sum_{i=1}^{\lfloor\frac{n}{2}\rfloor}h_K(\phi^i(b))+ \sum_{j=1}^{\lfloor\frac{n}{2}\rfloor}h_K(\phi^j(0))\bigg)+F_2
\end{equation}
 for all but finitely many $n\in Z(\phi,b,S,\ell)$ and some positive constants $F_i=F_i(K,\phi,S,\ell,m)$.
 
 On the other hand, it is well known that the canonical height $\hat{h}_\phi$ satisfies: 
 \[\text{(a).}\;\;\;\hat{h}_\phi=h_K+O(1)\;\;\;\;\;\;\;\;\;\;\;\; \text{(b).}\;\;\;\hat{h}_\phi(\phi^s(\alpha))=d^s\cdot \hat{h}_\phi(\alpha) \] for all $\alpha\in K$ and all integers $s\geq0$; see \cite[Thm. 3.20]{Silv-Dyn}. In particular, (\ref{combin2}) implies that
 \begin{equation}{\label{combin3}} 
d^{n-m}\cdot\hat{h}_\phi(b)\leq G_1\cdot\bigg(\sum_{i=1}^{\lfloor\frac{n}{2}\rfloor}d^i\cdot \hat{h}_\phi(b)+ \sum_{j=1}^{\lfloor\frac{n}{2}\rfloor}d^j\cdot\hat{h}_\phi(0) \bigg)+G_2\cdot n+G_3 
\end{equation}
for almost all $n\in Z(\phi,b,S,\ell)$ (those $n$ such that $P_n\notin T$) and some constants $G_i=G_i(K,\phi,S,\ell,m)$. In particular, for almost all $n\in Z(\phi,b,S,\ell)$,
\[d^{n-m}\leq G\cdot \big(d^{\lfloor\frac{n}{2}\rfloor+1}+n+1\big),\;\;\;\;\text{where}\;\;G:=\max\bigg\{G_1,\frac{G_1\cdot \hat{h}_\phi(0)}{\hat{h}_\phi(b)},\frac{G_2}{\hat{h}_\phi(b)},\frac{G_2}{\hat{h}_\phi(b)}\bigg\}.\] However, since $m$ is fixed, such $n$ are bounded.            
\end{proof}
\end{section} 
\begin{section}{Dynamical Galois Groups}
Our main interest in proving the finiteness of $\mathcal{Z}(\phi,b,\ell)$ comes from the Galois theory of iterates. In particular, if $\phi(x)=(x-\gamma)^2+c$ is a quadratic polynomial and $\mathcal{Z}(\phi,\gamma,2)$ is finite, then the Galois groups of $\phi^n$ are large \cite[Theorem 3.3]{RafeThesis}, enabling us to compute the density of prime divisors $\mathcal{P}_\phi(b)$ (for all $b\in K$) via a suitable Chebotarev density theorem \cite{Eventually, Jones}. 

To define the relevant dynamical Galois groups, let $\phi$ be a polynomial and assume that $\phi^n$ is separable for all $n\geq1$; hence, the set $T_n(\phi)$ of roots of $\phi, \phi^2,\dots ,\phi^n$ together with $0$, carries a natural $\deg(\phi)$-ary rooted tree structure: $\alpha,\beta\in T_n(\phi)$ share an edge if and only if $\phi(\alpha) =\beta$ or $\phi(\beta)=\alpha$. Furthermore, let $K_n:=K(T_n(\phi))$ and $G_n(\phi):=\Gal(K_n/K)$. Finally, we set
\begin{equation}{\label{Arboreal}}
T_\infty(\phi):=\bigcup _{n \geq 0} T_n(\phi)\;\;\text{and}\;\; G_\infty(\phi)=\varprojlim G_n(\phi).
\end{equation}  
Since $\phi$ is a polynomial with coefficients in $K$, it follows that $G_n(\phi)$ acts via graph automorphisms on $T_n(\phi)$. Hence, we have injections $G_n(\phi) \hookrightarrow \Aut(T_n(\phi))$ and $G_\infty(\phi) \hookrightarrow \Aut(T_\infty(\phi))$ called the \emph{arboreal representations} associated to $\phi$.   
 
A major problem in dynamical Galois theory, especially over global fields, is to understand the size of $G_\infty(\phi)$ in $\Aut(T_\infty(\phi))$; see \cite{B-J,Me,R.Jones,Odoni,Stoll}. We now use Theorem \ref{PrimDivThm} to prove a finite index result for many quadratic polynomials \big(c.f. \cite[Theorem 1]{uniformity}\big), including the family defined in Corollary \ref{eg}, and provide an outline for further examples.
\begin{proof}[Proof (Corollary 1)] Let $\phi(x)=(x-\gamma)^2+c$ and let $m\geq2$. Then we have a map 
\begin{equation}{\label{map}} \Phi_m:C_{2,m}(\phi)\rightarrow E_\phi,\;\;\;\; \Phi(x,y)=\big(\phi^{m-1}(x),\;y\cdot(\phi^{m-2}(x)-\gamma)\big). 
\end{equation} 
It follows from Proposition \ref{prop} below that $C_{2,m}$ is non-isotrivial. On the other hand, since $\widebar{\mathcal{O}}_\phi(\gamma)$ contains no squares in $K$, \cite[Proposition 4.2]{Jones} implies that $\phi^n$ is an irreducible polynomial over $K$ for all $n$; hence,  $C_{2,m}(\phi)$ and $E_\phi$ are non-singular; see \cite[Lemma 2.6]{Jones}. Therefore, we may choose $m$ so that $C_{2,m}(\phi)$ is a non-isotrivial curve of genus at least $2$. In particular, $\phi$ is dynamically 2-power non-isotrivial and Theorem \ref{PrimDivThm} implies that $\mathcal{Z}(\phi,b,2)$ is finite for all wandering $b\in K$.

For the second claim, we apply this fact to $b=\gamma$ and use \cite[Theorem 3.3]{RafeThesis} to deduce that $G_\infty(\phi)\leq\Aut(T(\phi))$ is a finite index subgroup. Finally, \cite[Theorem 4.2]{R.Jones} an \cite[Theorem 1.3]{RafeThesis} imply that the density of $\mathcal{P}_\phi(b)$ is zero for all $b\in K$.         
\end{proof} 
We now apply Corollary \ref{Galois} to the family $\phi_{(f,g)}$ defined in Corollary \ref{eg}.   
\begin{proof}[Proof (Corollary 2)] Let $K=\mathbb{F}_q(t)$ and let $\phi(x)=\phi_{(f,g)}=(x-f(t)\cdot g(t)^d)^2+g(t)$. It suffices to check conditions (a) and (b) of Corollary \ref{Galois} hold to prove Corollary \ref{eg}. We first show that the adjusted critical orbit of $\phi$, the set $\{-\phi(f\cdot g^d),\phi^2(f\cdot g^d),\phi^3(f\cdot g^d),\dots\}$, contains no perfect squares in $K$; in particular, $\phi^n$ is an irreducible polynomial over $K$ for all $n\geq1$; see \cite[Proposition 4.2]{Jones}. 

Note that $-\phi(f\cdot g^d)=-g$ is not a square in $K$ by assumption. On the other hand, we let $h:=g-f\cdot g^d$ and suppose that 
\begin{equation}{\label{iterate}} j^2=\phi^n(f\cdot g^d)=((((h^2+h)^2+h)^2+h)^2+\dots+h)^2+g
\end{equation} 
for some polynomial $j\in \mathbb{F}_q[t]$ and some $n\geq2$. Hence, $j^2=g^2\cdot k^2+g=g\cdot(g\cdot k^2+1)$ for some $k\in \mathbb{F}_q[t]$, since $g\vert h$. However, because $\mathbb{F}_q[t]$ is a UFD and $g$ and $g\cdot k^2+1$ are coprime, it follows that $g=l^2$ and $g\cdot h^2+1=m^2$ for some $l,m\in \mathbb{F}_q[t]$. In particular, $1=(m+l\cdot h)(m-l\cdot h)$ and both factors are constant. Hence, $2m=(m+l\cdot h)+(m-l\cdot h)$ is also constant. Finally, since $m^2-1=g\cdot h^2$ and $h=g-f\cdot g^d$, it follows that $g, h$, and $f$ are all constant. In particular, the $j$-invariant of $E_\phi$ is constant,  a contradiction. 

On the other hand, the right hand side of (\ref{iterate}) implies that \[\deg(\phi^n(f\cdot g^d))\leq\max\big\{2^{n-1}\cdot\deg(h),\deg(g)\big\}=\max\big\{2^{n-1}\cdot[\deg(g)+\deg(1-f\cdot g^{d-1})],\deg(g)\big\},\] with equality if the terms are unequal. In particular, if $\deg(h)\neq 0$, then we may choose $n$ large enough so that $\deg(\phi^n(f\cdot g^d))=2^{n-1}\cdot\deg(h)\geq 2^{n-1}$, and $\phi$ is post-critically infinite. Otherwise, we may assume that $h$ is constant. Consequently, since $h=g\cdot(1-f\cdot g^{d-1})$, we deduce that either $g$ is constant or $1-f\cdot g^{d-1}=0$. However, $g$ and $h$ constant implies that $f$ is constant, a contradiction. We deduce that $1-f\cdot g^{d-1}=0$. In particular, $g=f\cdot g^d$ and $\phi(x)=(x-g)^2+g$. In this case the elliptic curve 
\[E_\phi:\;Y^2=(X-g)\cdot\phi(X)=(X-g)\cdot\big((X-g)^2+g)\] 
has $j$-invariant $1728$, contradicting our assumption that $E_\phi$ have non-constant $j$-invariant.                          
\end{proof} 
We expect that most $\phi\in K[x]$ are dynamically $\ell$-power non-isotrivial for some $\ell$, and we state a conjecture along these lines:    
\begin{conjecture} Suppose that $\phi\in K[x]\mysetminus \widebar{\mathbb{F}}_q[x]$ satisfies the following conditions: 
\begin{enumerate} 
\item[\textup{(1)}] $\deg(\phi)\geq2$,
\item[\textup{(2)}] $\phi$ is not conjugate to $x^d$ (as a function on $\mathbb{P}^1$)
\item[\textup{(3)}] $\gcd(\deg(\phi),q)=1$,
\item[\textup{(4)}] $0\notin \mathcal{O}_\phi(\gamma)$ for all critical points $\gamma\in\bar{K}$ of $\phi$. 
\end{enumerate}
Then there exists $\ell\geq2$ such that $\phi$ is dynamically $\ell$-power non-isotrivial  and $\gcd(\ell,q)=1$.   
\end{conjecture}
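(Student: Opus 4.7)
The conjecture requires producing $\ell \geq 2$ coprime to $q$ and $m \geq 1$ such that $C_{m, \ell}(\phi)$ is non-isotrivial of genus at least two. The plan is to handle the genus condition routinely and concentrate on non-isotriviality, which is the essential difficulty.

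For the genus, fix any prime $\ell$ coprime to $q$ (possible since $q$ is a prime power). Condition (4) implies $\phi^m$ is separable for every $m \geq 1$: a repeated root $\alpha$ of $\phi^m$ forces some $\phi^j(\alpha)$ with $j < m$ to be a critical point $\gamma$ of $\phi$ (by the chain rule), and then $\phi^{m-j}(\gamma) = 0$ contradicts (4), which also implicitly excludes $0$ from being a critical point. Hence $C_{m,\ell}(\phi)$ is smooth, and Riemann-Hurwitz applied to the projection $(X,Y) \mapsto X$ yields
\[g\bigl(C_{m,\ell}(\phi)\bigr) \geq \frac{(\ell - 1)\bigl(d^m - \gcd(\ell, d^m)\bigr)}{2},\]
where $d = \deg \phi$; this exceeds $2$ for $m$ sufficiently large.

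For non-isotriviality, which is the main obstacle, I would follow the template of Corollary 1 and construct, for some $m$, a dominant $K$-morphism $C_{m,\ell}(\phi) \to \Gamma$ onto an auxiliary smooth curve $\Gamma$ whose non-isotriviality is detected by an explicit moduli invariant (the role of $E_\phi$ and its $j$-invariant in the quadratic case). Non-isotriviality of $\Gamma$ would then force non-isotriviality of $C_{m, \ell}(\phi)$: an isotriviality of the source would exhibit the Jacobian of $\Gamma$ as an isogeny factor of a constant abelian variety, and standard descent for abelian varieties over function fields would force $\Gamma$ itself to be isotrivial. Natural candidates for $\Gamma$ include small-iterate curves $C_{m_0,\ell}(\phi)$ or elliptic and Jacobian subvarieties built from the critical data of $\phi$, in analogy with $E_\phi$.

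The hard part is producing such a $\Gamma$ whose moduli invariant is verifiably non-constant in $K$. Equivalently, one must rule out the scenario in which every $\phi^m$ is $\bar{K}$-affinely equivalent to a polynomial over $\bar{\mathbb{F}}_q$, i.e., there exist $\alpha_m, \beta_m, \gamma_m \in \bar{K}$ making $\alpha_m^{-\ell} \cdot \phi^m(\beta_m X + \gamma_m)$ lie in $\bar{\mathbb{F}}_q[X]$. The compatibility of these normalizations under composition should force $\phi$ itself to be $\bar{K}$-conjugate to some $\phi_0 \in \bar{\mathbb{F}}_q[x]$; then condition (2) (forbidding conjugacy to $x^d$), together with postcritical data from (4) and the hypothesis $\phi \notin \bar{\mathbb{F}}_q[x]$, should yield the desired contradiction. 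Making this rigidification rigorous, likely via a height or equidistribution argument on the dynamical moduli space of degree-$d$ polynomials modulo affine conjugation, is where the substantive work lies.
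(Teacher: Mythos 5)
The statement you were asked to prove is labeled a \emph{Conjecture} in the paper, not a theorem: the paper does not prove it, and it is left open. The only thing the paper offers is the Remark immediately following it, which observes that conditions (3) and (4) force $\operatorname{disc}(\phi^n)\neq 0$ for all $n$ (citing \cite[Lemma 2.6]{Jones}), so that the curves $C_{m,\ell}(\phi)$ are nonsingular and, because $\deg(\phi^m)=d^m$ grows, of genus at least $2$ for $m$ large. This is exactly the ``routine'' portion of your proposal, and it is handled correctly, modulo one omission: you invoke only condition (4) for separability of $\phi^m$, but condition (3) is also needed --- it is what guarantees $\phi'$ is not identically zero in characteristic $p$ (i.e.\ $\phi$ is not a polynomial in $x^p$), so that the chain-rule argument has something to work with. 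The paper's Remark cites both (3) and (4) for precisely this reason.

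The substantive content of the conjecture is the non-isotriviality, and your proposal does not establish it. You correctly identify the natural strategy --- mirror the proof of Corollary~\ref{Galois} by exhibiting a dominant $K$-morphism $C_{m,\ell}(\phi)\to\Gamma$ to an auxiliary curve $\Gamma$ whose non-isotriviality is certified by an explicit moduli invariant, then invoke Proposition~\ref{prop} in contrapositive form --- and you correctly observe that the alternative would be a very rigid normalization of every iterate $\phi^m$ into $\widebar{\mathbb{F}}_q[x]$ which ought to contradict (2) and $\phi\notin\widebar{\mathbb{F}}_q[x]$. But no candidate $\Gamma$ is actually constructed for general degree $d$, no moduli invariant is computed, and the ``rigidification'' step is explicitly deferred (``is where the substantive work lies''). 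In other words, your write-up is a correct reduction of the conjecture to its genuine open core together with a plausible program of attack, not a proof. Since the paper itself does not resolve this and simply states it as a conjecture, there is no paper argument to compare against; what you have matches the paper's Remark on the easy part and leaves the hard part in the same open state the paper does.
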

\begin{remark} Conditions (3) and (4) imply that the discriminant of $\phi^n$ is non-zero for all $n$; see \cite[Lemma 2.6]{Jones}. In particular, the curves $C_{m,\ell}(\phi)$ are non-singular and eventually of large genus, since $\deg(\phi^m)$ grows exponentially by condition (1).       
\end{remark} 
We finish by proving an auxiliary result used in the proof of Corollary \ref{eg}. The argument below was suggested by Bjorn Poonen. 
\begin{proposition}{\label{prop}} Let $\Phi: C_1\rightarrow C_2$ be a non-constant morphism. If $C_1$ is isotrivial, then $C_2$ is isotrivial. 
\end{proposition}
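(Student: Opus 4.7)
The plan is to reduce the statement to an assertion about Jacobians and then invoke the theory of the $K/k$-trace. Since any smooth projective curve of genus zero over $K$ becomes isomorphic to $\mathbb{P}^1_K$ after a finite extension (and is therefore automatically isotrivial, as $\mathbb{P}^1$ is defined over $k$), I would first reduce to the case $g(C_2)\geq 1$.

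A non-constant morphism $\Phi:C_1\to C_2$ of smooth projective curves induces a surjective homomorphism of Jacobians $\Phi_*:\Jac(C_1)\twoheadrightarrow \Jac(C_2)$ by Albanese functoriality. By the isotriviality of $C_1$, after replacing $K$ with a finite extension I may assume $C_1\cong X_1\times_k K$ for some smooth projective curve $X_1/k$, so that $\Jac(C_1)=\Jac(X_1)\times_k K$ is a constant abelian variety over $K$. The universal property of the $K/k$-trace (Lang--N\'eron) then says that the surjection $\Jac(X_1)\times_k K\twoheadrightarrow \Jac(C_2)$ factors through the canonical trace map $\tau\colon\operatorname{Tr}_{K/k}(\Jac(C_2))\times_k K\to \Jac(C_2)$, which is therefore surjective. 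Since $\tau$ has finite kernel, this exhibits $\Jac(C_2)$ as isogenous to a constant abelian variety over $K$.

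To conclude that $C_2$ is itself isotrivial, I would argue as follows. When $g(C_2)=1$, the curve agrees with its Jacobian over $\bar K$, so it is isogenous to a constant elliptic curve $E_0\otimes_k K$; the modular polynomials $\Phi_n(j(C_2),j(E_0))=0$ then force $j(C_2)$ to be algebraic over $k$, and since $k$ is algebraically closed in $K$ by definition of the constant field, $j(C_2)\in k$ and $C_2$ is isotrivial. When $g(C_2)\geq 2$, I would upgrade the isogeny to an isomorphism after a further finite extension (the finite kernel group scheme becomes constant), descend the principal polarization using that the N\'eron--Severi group of a constant abelian variety is constant, and then invoke Torelli's theorem to recover $C_2$ as a constant curve over $k$. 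The main obstacle is this last step, which in positive characteristic requires care with potentially inseparable isogeny kernels; however, the simpler $g(C_2)=1$ case is all that is needed for the application in Corollary~\ref{eg}, where $C_2=E_\phi$ is an elliptic curve.
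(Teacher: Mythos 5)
Your proposal takes a genuinely different route from the paper. The paper argues by contradiction and specialization: after reducing to the case where $C_1$ is constant and $\Phi$ is separable, it chooses uncountably many $\mathbb{F}_q$-embeddings $\sigma\colon K\hookrightarrow\Omega$ into an uncountable algebraically closed field, producing uncountably many non-isomorphic specializations $C_2^\sigma$ receiving non-constant maps from the fixed curve $C_1$. When $g_2=1$ this contradicts the countability of an elliptic isogeny class (each $J_1$ has only countably many isogeny factors), and when $g_2\geq 2$ it contradicts the de Franchis--Severi finiteness theorem in the form of Samuel's result. Your approach instead passes to Jacobians via Albanese functoriality, invokes the Lang--N\'eron $K/k$-trace to show $\Jac(C_2)$ is isogenous to a constant abelian variety, and then returns to the curve via modular polynomials (for $g=1$) or Torelli plus descent of the polarization (for $g\geq 2$). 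Both arguments hinge on the same underlying tension -- a non-constant curve cannot have Jacobian ``isogenously constant'' in a controlled way -- but the paper resolves it by counting after specialization, while you resolve it structurally.

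Your $g(C_2)=1$ case is complete and correct as written, and you rightly observe that this is the only case used in the application (the target curve in the proof of Corollary~\ref{Galois} is $E_\phi$, an elliptic curve). The $g\geq 2$ branch is only sketched and, as you acknowledge, has real gaps: upgrading the isogeny $\tau$ to an isomorphism after a finite extension is not automatic when the kernel is an infinitesimal group scheme, and even granting an isomorphism of abelian varieties over $\bar K$ one must descend the canonical principal polarization before Torelli applies, which requires a separate argument about the N\'eron--Severi group. The paper's appeal to Samuel sidesteps all of these issues, which is presumably why it was chosen. You also handled the Frobenius/inseparability reduction implicitly in the $g=1$ case, whereas the paper does it explicitly at the outset by factoring $\Phi$ as a power of Frobenius followed by a separable map; if you were to flesh out $g\geq 2$ you would need a similar explicit reduction.
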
 
\begin{proof}Suppose not. That is, suppose that $C_1$ is isotrivial but $C_2$ is not.
By replacing $K$ by a finite extension, we may assume that $C_1$ is constant. We may also assume that $\Phi$ is separable (if not, then it factors as a power of Frobenius composed with a separable
morphism, say $C_1 \rightarrow C_1' \rightarrow C_2$,
and then (maybe after a finite extension) 
$C_1'$ is isomorphic to the curve obtained by taking the $p^n$-roots
of all the coefficients of $C_1$, so $C_1'$ is constant too,
and is separable over $C_2$; rename $C_1'$ as $C_1$).

We fix some notation. Let $g_i$ be the genus of $C_i$, and let $J_i$ be the Jacobian of $C_i$. Since $C_2$ is non-isotrivial, $g_2>0$. 

Let $\Omega$ be an uncountable algebraically closed field containing $F_q$.
Specializing (by choosing an $F_q$-homomorphism $\sigma: K \rightarrow \Omega$)
gives separable maps over $\Omega$ from the same $C_1$ to 
uncountably many non-isomorphic curves $C_2^\sigma$.  
Each isogeny class of an elliptic curve over $\Omega$
consists of at most countably many elliptic curves,
so if $g_2=1$, this would imply that in the decomposition of $J_i$ 
up to isogeny, uncountably many isogeny factors occur,
which is impossible.
If $g_2>1$, then the infinitely many maps from $C_1$ to various curves $C_2^\sigma$
contradict \cite[Theorem 2]{Samuel}. 
\end{proof} 
\begin{remark} With some additional hypothesis, Proposition \ref{prop} holds for projective varieties of arbitrary dimension \cite{isotrivial}. Such a result is necessary if one hopes to generalize Theorem \ref{PrimDivThm} as in \cite{Silv-Vojta}    
\end{remark} 
\textbf{Acknowledgements:} It is a pleasure to thank Joseph Silverman, Bjorn Poonen, Felipe Voloch, and Rafe Jones for the discussions related to the work in this paper.
\end{section} 

\end{document}